\documentclass[11pt,reqno]{amsart}
\pdfoutput=1
\usepackage[utf8]{inputenc} 
\usepackage[T1]{fontenc}    
\usepackage{hyperref}       
\usepackage{url}            
\usepackage{booktabs}       
\usepackage{amsfonts}       
\usepackage{nicefrac}       
\usepackage{microtype}      
\usepackage[useregional=false]{datetime2} 

\usepackage[english]{babel}
\usepackage{amsmath}
\usepackage{mathrsfs}
\usepackage{amssymb}
\usepackage{graphicx}
\usepackage[colorinlistoftodos]{todonotes}
\usepackage{url}
\usepackage{hyperref}
\usepackage{amsthm}
\usepackage{csquotes}
\usepackage{comment}
\usepackage[ruled]{algorithm}
\usepackage{algorithmicx}
\usepackage[noend]{algpseudocode}
\floatname{algorithm}{Procedure} 
\algnewcommand\algorithmicinput{\textbf{def}}
\algnewcommand\def{\item[\algorithmicinput]}
\algrenewcommand{\algorithmicrequire}{\textbf{Input:}}
\algrenewcommand{\algorithmicensure}{\textbf{Output:}}
\usepackage{enumitem}
\usepackage{wrapfig}
\usepackage{lipsum}
\usepackage{xcolor}
\usepackage{dsfont}
\usepackage{import}
\usepackage{xifthen}
\usepackage{pdfpages}
\usepackage{transparent}

\DeclareMathOperator{\Hyp}{Hyp}
\DeclareMathOperator{\Tree}{Tree}
\newcommand{\EE}{\mathbb{E}}
\newcommand{\PP}{\mathbb{P}}
\newcommand{\KwikCluster}{\textsc{KwikCluster}}

\usepackage[numbers]{natbib}
\usepackage{bibentry}

\theoremstyle{plain}

\newtheorem{thm}{Theorem}[section] 
\newtheorem*{thm-non}{Theorem} 

\theoremstyle{definition}
\newtheorem{lemm}[thm]{Lemma}

\newtheorem{prop}[thm]{Proposition}
\newtheorem{coro}[thm]{Corollary}
\newtheorem{remark}[thm]{Remark}

\begin{document}

\title{A quantitative tree-likeness bound from average hyperbolicity}
\author{Joon-Hyeok Yim}
\address{Yale University, New Haven, CT 06511}
\email{joonhyeokyim@gmail.com}
\thanks{This work was primarily done while the author was at Yale University.}
\date{\today}
\subjclass[2020]{51F30, 53C23, 05C05, 68R12}

\begin{abstract}
    Chatterjee and Sloman proved that a bounded measurable similarity function with sufficiently small average Gromov hyperbolicity admits a tree representation with small mean approximation error. Their argument uses a weighted version of Szemer\'edi's regularity lemma and does not yield explicit quantitative bounds. Here, we establish a tighter relation between average hyperbolicity and mean tree approximation error. For a similarity function $s:S\times S\to[0,b]$, we prove that
    \[ \Tree(s) \leq (63/e)^{1/3} \sqrt[3]{b^2 \Hyp(s)} \leq 2.8512 \sqrt[3]{b^2 \Hyp(s)}.\]
    The proof uses a simple pivoting construction inspired by \KwikCluster. We also discuss the optimal dependence on average hyperbolicity, including a square-root lower bound, and connections with ultrametric fitting.
\end{abstract}

\maketitle

\section{Introduction}
Gromov $\delta$-hyperbolicity is a metric property that captures certain large-scale features of negative curvature in general metric spaces. Since a geodesic metric space can be isometrically embedded into a real tree if and only if it is 0-hyperbolic, $\delta$-hyperbolicity also provides a structural measure of tree-likeness, with smaller values indicating smaller deviations from tree-metric. The relationship between $\delta$-hyperbolicity and tree approximation has been studied in metric geometry~\cite{bridson2013metric, chowdhury2016improved, ghys1990espaces, gromov1987hyperbolic}, network science~\cite{albert2014topological}, and approximation algorithms~\cite{sonthalia2020tree, yim2024fitting}.

Chatterjee and Sloman~\cite{chatterjee2021average} introduced a probabilistic formulation of tree approximation through average Gromov hyperbolicity for similarity functions on probability spaces. Their framework extends the role of Gromov products in metric spaces. On a rooted tree, the Gromov product of two points records the length shared by their paths from the root, providing a similarity representation of the hierarchy. In this setting, average hyperbolicity measures the expected violation of the corresponding three-point inequality, while tree-likeness measures the least mean error in approximating the similarity by such a tree representation. Both quantities are defined using independent samples from the underlying probability measure.

They proved that, given a fixed bound on the similarity function, sufficiently small average hyperbolicity guarantees a tree representation with arbitrarily small mean approximation error. Their result applies also to infinite spaces and nonuniform probability measures, extending the connection between hyperbolicity and tree approximation to a probabilistic setting. The proof uses a weighted version of Szemer\'edi's regularity lemma, and they explicitly raise the question of whether this can be bypassed to obtain useful quantitative bounds. In this paper, we answer this question affirmatively by giving a direct probabilistic construction that yields an explicit polynomial bound. Working with the same notions of average hyperbolicity and tree-likeness, we prove that every bounded similarity function $s: S\times S \to [0,b]$ in this framework satisfies
\[ \Tree(s) \leq (63/e)^{1/3} b^{2/3} \Hyp(s)^{1/3}.\]
In particular, after normalizing $b=1$, the tree approximation error is $O(\Hyp(s)^{1/3})$. Thus the qualitative implication admits an explicit rate in terms of $\Hyp(s)$ and the similarity bound $b$, with no dependence on the cardinality of the underlying space.

An algorithmic approach to ultrametric fitting is to express the input as a similarity function and cluster the resulting threshold relations. Tree metric fitting can also be approached through this framework by using Gromov products with respect to a reference point. At each threshold, this gives a correlation clustering problem, which seeks to minimize disagreements between the threshold relation and a partition~\cite{ailon2008aggregating}. However, partitions obtained separately at different thresholds need not be nested, even when the threshold relations themselves are nested. Representing the similarity by a rooted tree requires the partition at a higher similarity threshold to refine those at lower thresholds. Controlling disagreement while enforcing this consistency is a central issue in hierarchical correlation clustering and its connection with ultrametric fitting under $\ell_1$ objectives~\cite{an2025handling, cohen2024fitting}. As in~\cite{yim2024fitting}, we seek to control the mean approximation error directly in terms of average hyperbolicity. Our formulation on probability spaces also allows the underlying space to be infinite.
\section{Preliminaries}
We use the notions of average hyperbolicity and tree-likeness introduced by Chatterjee and Sloman~\cite{chatterjee2021average}. Let $(S,\mathcal{F},\PP)$ be a probability space with $\mathcal{F}$ countably generated, and let $s:S \times S \to [0,b]$ be a symmetric (i.e., $s(x,y) = s(y,x)$) measurable similarity function. Note that the measure $\PP$ may have atoms.

The average hyperbolicity of $s$ is defined by
\[ \Hyp(s) := \mathbb{E} \left[(\min\{s(X,Z), s(Y,Z)\} - s(X,Y))_{+}\right],
\]
where $(w)_+ = \max(w,0)$ and $X,Y,Z$ are independent random variables with $\PP$.

A rooted (graph-theoretic) tree $T$ with root $r$ is said to be \emph{compatible} with $(S,\mathcal{F})$ if:
\begin{enumerate}[label=(\roman*)]
    \item the set of leaves of $T$ is exactly $S$;
    \item $T\setminus S$ is finite;
    \item for every $v\in T\setminus S$, the set of leaves descending from
    $v$ is measurable.
\end{enumerate}

Unlike \cite{chatterjee2021average}, for convenience, we equip compatible trees with positive edge weights and use the resulting \emph{weighted} shortest path metric $d_T$. For $x,y \in S$, their Gromov product with respect to the root $r$ is
\[ \langle x,y \rangle_r := \frac{d_T(x,r) +d_T(y,r) - d_T(x,y)}{2}.\]
$\langle x,y \rangle_r$ measures the length shared by the paths from $r$ to $x$ and $y$. We require the map $(x,y)\mapsto \langle x,y \rangle_r$ to be $\mathcal{F}\otimes\mathcal{F}$-measurable. In order to do so, we additionally assume that the diagonal $\Delta_S:=\{(x,x): x\in S\} \in \mathcal{F}\otimes\mathcal{F}$\footnote{This assumption is not stated explicitly in~\cite{chatterjee2021average} and does not follow from countable generation alone. The same measurability issue arises in the unweighted formulation as well. For example, take $S=\{a,b\}$ with $\mathcal F=\{\varnothing,S\}$. The tree with root $r$ and leaves $a,b$ satisfies the compatibility conditions, but its Gromov product is not measurable.}. The assumption holds on standard Borel spaces.
We further assume that $a:S \to (0,\infty)$ is $\mathcal{F}$-measurable where $a(x)>0$ denotes the length of the edge incident to leaf $x$. Note that this assumption is automatically satisfied in the unweighted formulation as $a = 1$.

The tree-likeness of $s$ is defined by
\[ \Tree(s) := \inf_{T} \mathbb{E} |s(X,Y) - \langle X,Y \rangle_r|,\]
where $X,Y$ are i.i.d. with $\PP$ and $T$ ranges over all compatible rooted trees together with positive edge lengths satisfying the measurability requirement above.

Note that this weighted formulation has the same infimum as the original definition of Chatterjee and Sloman~\cite{chatterjee2021average}, which uses an unweighted tree $T$ and a global scaling factor $\alpha$. For $\alpha > 0$, assign length $\alpha$ to every edge; the case $\alpha = 0$ follows by taking a limit. Conversely, fix a weighted tree with finite mean approximation error. Since leaf edges contribute only when $X=Y$, integrability allows all but finitely many leaf lengths to be replaced by a common small unit with arbitrarily small mean error. Rounding the remaining edge lengths to positive integer multiples of this unit and subdividing then yields a scaled compatible unweighted tree. Letting the error tend to zero proves equality of the two infima.

\begin{remark}
    This framework also applies to fitting problems for finite metric spaces. For ultrametric fitting, let $b = \max_{x,y} d(x,y)$ and set $s(x,y):= b - d(x,y)$. If $s_T$ is a fitted tree similarity represented by a rooted tree with all leaves at depth $b$, then $d_U := 2(b - s_T)$ is the corresponding leaf-to-leaf ultrametric distance. Since $d_U / 2$ is also an ultrametric, the mean error $\EE |s_T - s| = \EE |d - (d_U/2) |$ measures how well $d$ can be approximated by an ultrametric. Consequently, $\Tree(s)$ measures the optimal mean $\ell_1$ error in approximating $d$ by an ultrametric.
    
    To recover tree fitting problem, fix a reference point $r$ and set $s(x,y):= \frac{d(x,r) + d(y,r) - d(x,y)}{2}$. Then fitting $s$ with the distances to $r$ fixed yields a rooted tree approximation of $d$. This reconstruction is analogous to that used in \textsc{RootedTreeFit}~\cite[Algorithm~5]{yim2024fitting}.
\end{remark}

We are now ready to state our main theorem.

\begin{thm}\label{thm:main}
    Under the above assumptions,
    \[ \Tree(s) \leq (63/e)^{1/3} b^{2/3} \Hyp(s)^{1/3}.\]
\end{thm}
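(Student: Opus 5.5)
The plan is to build an explicit random compatible tree by a hierarchical pivoting scheme in the spirit of \KwikCluster. I will then bound the expected mean error $\EE|s(X,Y)-\langle X,Y\rangle_r|$ averaged over the randomness of the construction, and conclude that some realization achieves at most this average. Fix a scale parameter $h\in(0,b]$, to be optimized at the end, and a random offset $U$ uniform on $[0,h)$. Use the thresholds $t_j=U+jh$ for $j\ge 0$ with $t_j\le b$. At each threshold $t$ we have the relation $x\sim_t y \iff s(x,y)\ge t$, and these relations are nested in $t$. The tree is the one induced by nested partitions $\mathcal P_0\succeq\mathcal P_1\succeq\cdots$, one for each threshold. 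A cluster of $\mathcal P_j$ becomes an internal node at depth $t_j$, and every leaf $x$ is attached to its finest cluster. The edge lengths are chosen so that $\langle x,y\rangle_r$ equals the last threshold $t_j$ at which $x,y$ still share a cluster (rounded within $h$). The random offset $U$ makes the rounding error at most $h$ in expectation per pair, uniformly in $s(x,y)$.

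The partitions are built by refinement. Inside each cluster $C$ of $\mathcal P_{j-1}$, draw i.i.d.\ pivots $Z_1,Z_2,\dots$ from $\PP$ (restricted to $C$, or globally with non-members skipped). Assign each $x\in C$ to the first pivot $Z_i$ with $s(x,Z_i)\ge t_j$. This is \KwikCluster\ with a measure-valued pivot sequence. Nestedness holds automatically because we only refine. Measurability of clusters holds because each is defined by finitely many measurable conditions on $s(\cdot,Z_i)$.

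For finiteness of $T\setminus S$, truncate after $m$ pivots per cluster. A point not captured by any pivot becomes a singleton child directly. Since a pivot drawn from a cluster of mass $p$ lands in a given region of mass $q$ with probability $q/p$, the leftover mass decays like $(1-q)^m$. Optimizing over $m$ against the at most $b$-sized error these points cost should produce the factor $1/e$ through $\max_q q(1-q)^{m}\approx 1/(em)$. This is my guess for where $e$ enters the constant.

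The core step is the charging argument. For a pair $(X,Y)$, there are two kinds of disagreement at level $t_j$. Either $s(X,Y)\ge t_j$ but $X,Y$ are separated at level $j$, or $s(X,Y)<t_j$ but they are joined. Each such level costs $h$ in the Gromov product. A join requires a pivot $Z$ with $s(X,Z),s(Y,Z)\ge t_j>s(X,Y)$. A separation happens through the first pivot $Z$ that captures exactly one of $X,Y$, with $s(X,Z)\ge t_j>s(Y,Z)$ while $s(X,Y)\ge t_j$. In the first case the triple $(X,Y,Z)$ violates hyperbolicity by $\min\{s(X,Z),s(Y,Z)\}-s(X,Y)$. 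In the second case the rotated triple $(Y,Z,X)$ violates it by $\min\{s(X,Y),s(X,Z)\}-s(Y,Z)$. As in the \KwikCluster\ analysis, the pivot is the first one "deciding" the pair, so the probability it is bad is at most the conditional probability of a bad triangle. Averaging over $U$ converts the indicator "a threshold lies between $s(Y,Z)$ and $\min\{\cdots\}$" into (violation)$/h$.

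Summing, I aim for a per-level bound of the form $\EE[\text{disagreement cost}]\lesssim \frac{m}{h}\Hyp(s)$ per level (each of the $m$ pivots contributes a $\Hyp/h$ term), times $b/h$ levels of cost $h$. Adding the truncation and rounding terms should give a total of the shape
\[
\Tree(s)\le c_1 h+\frac{c_2\, b}{m}+\frac{c_3\, b\,m\,\Hyp(s)}{h}.
\]
Optimizing first over $m$ and then over $h$ should yield $O(b^{2/3}\Hyp(s)^{1/3})$. Tracking the constants ($63$ presumably arises from $27\cdot\tfrac{7}{3}$-type factors in the $Ah+C/h^2$ optimization) should give $(63/e)^{1/3}$.

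The main obstacle is this charging step. Pivots are drawn from the conditional measure on the parent cluster, not from $\PP$, so the bad-triangle probability under the cluster law must be compared to $\Hyp(s)$ without losing a factor $1/\PP(C)$. I would try to handle this by weighting: a pair in a cluster of mass $p$ is decided by a pivot with density $1/p$, but the pair itself appears with weight $p^2$. The product should then telescope to at most the unconditional triple expectation. I would also verify that errors across different levels add, rather than compound, under refinement.
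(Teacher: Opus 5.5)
Your outline has the right ingredients and the paper's final shape: a shifted grid with $\sum_j\beta_j\le\Hyp(s)/h$, bad-triangle charging of false joins and of separations by a pivot that captures only one point, a $\sup_q q(1-q)^m\le 1/(em)$ term for uncovered pairs, and the trade-off $h+b/m+bm\Hyp(s)/h$. But there is a genuine gap: the construction you describe does not support that trade-off under either of your sampling variants.

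\emph{Conditional sampling.} Suppose each class $C$ gets $m$ pivots from $\PP(\cdot\mid C)$, which is what ``truncate after $m$ pivots per cluster'' amounts to. Then your weighting does not telescope. Set $\beta_C:=\EE[(1-A_j(X,Y))A_j(X,Z)A_j(Y,Z)\mathbf 1\{X,Y,Z\in C\}]$. The pair weight $\PP(C)^2$ together with the pivot density $\mathbf 1_C/\PP(C)$ gives a charge of $m\beta_C/\PP(C)$ per class. So $m\sum_C\beta_C/\PP(C)$ is not bounded by $m\beta_j$: small classes that carry bad triangles are oversampled.

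\emph{Fresh global sampling.} Suppose instead the $m$ draws are global and fresh at each level, so $C$ gets about $m\PP(C)$ of them. The bad-triangle charge is then fine, but the truncation loss becomes cumulative. A pair risks being left uncovered independently at every level, and once its points are made singletons it is lost at all later levels, at a cost up to $b$ rather than $h$. This is a real effect, not a weakness of the estimate. Take $n$ blocks of mass $1/n$ with $s=b$ inside and $s=0$ across. A block receives no pivot at a given level with probability $\rho\approx e^{-m/n}$, so its death level is geometric. For $n\approx m/\ln L$ (so $\rho\approx 1/L$) the expected loss is about $hL/(en)\approx b\ln L/(em)$. Hence no bound $c_2 b/m$ with $c_2$ independent of $L=\lfloor b/h\rfloor$ holds. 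Your optimization would pick up a factor $\log^{1/3}(b/h)$, losing both the pure cube-root rate and the constant $(63/e)^{1/3}$.

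The missing idea is the paper's shared pivot list, which fixes both problems.
\begin{itemize}
\item One ordered sample $Z_1,\dots,Z_N$ with $N\sim\operatorname{Poisson}(\lambda)$ is drawn once from $\PP$ and reused at every threshold.
\item $\ell_j(x)$ is the index of the first pivot of the \emph{whole} list with $s(x,Z_i)\ge t_j$, not restricted to the current class.
\item The hierarchy is the common refinement $K_j=\prod_{i\le j}H_i$, rather than re-clustering inside classes.
\end{itemize}
Pivots are then always $\PP$-distributed. This gives the charges $\lambda\beta_j$ for false positives and $2\lambda\sum_{j'\le j}\beta_{j'}$ for separations, with no $1/\PP(C)$ loss.

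More importantly, $\ell_j$ is nondecreasing in $j$. If one point of a pair is assigned at level $j$, that same point was assigned at every earlier level, so every earlier separation has a separating pivot. The only uncovered failure at level $j$ is that both points are unassigned at level $j$ itself. This event has probability at most $\EE_X[q_j(X)e^{-\lambda q_j(X)}]\le 1/(e\lambda)$ and is charged to level $j$ alone. Summing gives $\Tree(s)\le h+\lambda\Hyp(s)(2b/h+1)+b/(e\lambda)$.

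For the constant, take $b=1$ and use the trivial bound when $\Hyp(s)>e/63$. Otherwise set $h=(7\Hyp(s)/(3e))^{1/3}\le 1/3$ and $\lambda=1/(eh)$. The bound then becomes $3h=(63/e)^{1/3}\Hyp(s)^{1/3}$.
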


Our result can also be interpreted as a $\ell_1$-distortion bound in the spirit of ultrametric fitting.

\begin{coro}\label{thm:ultrametric-fitting}
    Let $(X,d)$ be a metric space with $|X| = n$, and define
    \[ \operatorname{AvgUM}(d):= \frac{1}{\binom{n}{3}} \sum_{x,y,z \in X} \max_{\pi \; \text{perm}} \left[ d(\pi x, \pi z) - \max(d(\pi x, \pi y), d(\pi y, \pi z)) \right].\]
    Then there exists a randomized polynomial time algorithm that outputs an ultrametric $d_U$ satisfying
    \[ \EE \left[ \sum_{x,y \in \binom{X}{2}} |d(x,y) - d_U(x,y)| \right] \leq n \sqrt[3]{\frac{63}{4e}M^2 \binom{n}{3} \operatorname{AvgUM}(d)} ,\]
    where $M := \max_{x,y \in X}d(x,y) = \operatorname{diam}(X,d)$.
\end{coro}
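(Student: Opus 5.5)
We apply Theorem~\ref{thm:main} to the uniform probability measure on the finite set $X$. We take the similarity $s(x,y):=M-d(x,y)\in[0,M]$, so $b=M$, as in the ultrametric part of the preceding Remark. The work is bookkeeping in three steps. First, identify $\Hyp(s)$ with a multiple of $\operatorname{AvgUM}(d)$. Second, check that the randomized pivoting construction behind Theorem~\ref{thm:main}, run on this finite input, is polynomial time and outputs a leveled tree, hence an ultrametric. Third, convert the mean error over ordered i.i.d. pairs into the sum over unordered pairs.

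\emph{Step 1 (hyperbolicity).} With $X,Y,Z$ i.i.d. uniform on $X$,
\[ \Hyp(s)=\EE\bigl[(d(X,Y)-\max\{d(X,Z),d(Y,Z)\})_+\bigr]. \]
Triples with a repeated point contribute $0$. If $X=Y$ the term is $(0-\cdot)_+=0$. If $Z=X$ it is $(d(X,Y)-d(X,Y))_+=0$, and symmetrically if $Z=Y$. Now fix an unordered triple $\{x,y,z\}$ of distinct points. The positive part can be nonzero only when $\{X,Y\}$ is a pair of maximal distance. In that case the value is (largest distance) $-$ (second largest distance), which is exactly $\max_\pi[d(\pi x,\pi z)-\max(d(\pi x,\pi y),d(\pi y,\pi z))]$; this maximum is attained by placing the largest pair in the $(x,z)$ slot and is always $\ge 0$. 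If the two largest distances tie, the violation is $0$ anyway. So each unordered triple is counted by exactly two ordered triples, namely the two orientations of its longest pair. Reading the sum in $\operatorname{AvgUM}$ over unordered triples, this gives
\[ \Hyp(s)=\frac{2\binom n3\operatorname{AvgUM}(d)}{n^3}. \]

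\emph{Step 2 (algorithm and ultrametric).} The proof of Theorem~\ref{thm:main} is a random pivoting construction, and it bounds the \emph{expected} error $\EE|s-s_T|$ by the right-hand side. This expectation is over the internal randomness of the construction as well as over the pair $(X,Y)$, so we obtain a randomized algorithm rather than only a bound on the infimum $\Tree(s)$. On a finite space:
\begin{itemize}[label={}]
\item only the at most $\binom n2+1$ distinct values of $s$ matter as thresholds;
\item each round of pivoting partitions $n$ points in $O(n^2)$ time;
\item so the total running time is polynomial.
\end{itemize}
The nested threshold partitions yield a rooted tree with all leaves at depth $M$. For $x\neq y$ we set $d_U(x,y):=M-s_T(x,y)$; this is an ultrametric because the partitions are nested, and $|d-d_U|=|s-s_T|$. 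I must verify in the construction that leaves sit at a common depth and that $s_T\le M$; if necessary I will post-process by truncation, which does not increase the error since $s\le M$. I expect this verification against the actual construction to be the main point of care.

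\emph{Step 3 (counting).} The diagonal terms of $\EE|s-s_T|$ are nonnegative, so we may drop them. Each unordered pair appears twice among the $n^2$ ordered pairs, hence
\[ \EE\sum_{\{x,y\}}|d-d_U|\le \tfrac{n^2}{2}\,\EE|s-s_T|. \]
Combining with Theorem~\ref{thm:main} and Step 1,
\[ \EE\sum_{\{x,y\}}|d-d_U| \le \tfrac{n^2}{2}\Bigl(\tfrac{63}{e}\Bigr)^{1/3}\Bigl(M^2\cdot\tfrac{2\binom n3\operatorname{AvgUM}(d)}{n^3}\Bigr)^{1/3}. \]
The power of $n$ is $n^2\cdot n^{-1}=n$, and the numerical constant is
\[ \tfrac12\Bigl(\tfrac{2\cdot 63}{e}\Bigr)^{1/3}=\Bigl(\tfrac{63}{4e}\Bigr)^{1/3}. \]
This gives exactly $n\sqrt[3]{\frac{63}{4e}M^2\binom n3\operatorname{AvgUM}(d)}$, as claimed.
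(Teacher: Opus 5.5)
Your route is the paper's: uniform measure on $X$, $s=M-d$ with $b=M$, express $\Hyp(s)$ through $\operatorname{AvgUM}(d)$, apply Theorem~\ref{thm:main}, and halve to pass from ordered to unordered pairs. Steps~1 and~3 are correct. Your identity $n^3\Hyp(s)=2\binom n3\operatorname{AvgUM}(d)$ is exactly what produces the constant $63/(4e)$. The paper's display writes $n^3\Hyp(s)=\binom n3\operatorname{AvgUM}(d)$, but its final equality silently uses your factor~$2$.

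The point you flagged as delicate is not an issue. Leaves of the constructed tree do not sit at a common depth: leaf $x$ hangs at depth $h\,j(x)+\varepsilon$. But off the diagonal $s_T=u$ with $u(x,y)=h\max\{j:K_j(x,y)=1\}$ (where $\max\varnothing:=0$). Transitivity of each $K_j$ gives $u(x,z)\ge\min\{u(x,y),u(y,z)\}$, and $u\le hL\le M$. So $M-u$ is an ultrametric with no truncation needed, and $\varepsilon$ only affects the diagonal.

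The genuine gap is the polynomial-time claim. Procedure~\ref{alg:shared-pivot-tree} draws $N\sim\operatorname{Poisson}(\lambda)$ pivots with $\lambda=1/(eh)$. After undoing the normalization, $\lambda\asymp(M/\Hyp(s))^{1/3}$, which is not bounded by any function of $n$: lengthening one side of an equilateral triangle by $\delta$ gives $\Hyp(s)=2\delta/27$. Merely sampling the pivots then costs $\Omega(\lambda)$ time, and your ``$O(n^2)$ per round'' tacitly assumes at most $n$ pivots.

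The missing idea, which the paper supplies, has two parts. First, a repeated copy of a pivot is never anyone's first neighboring pivot, since every point adjacent to it was already captured by the earlier copy. So only the ordered list of first occurrences matters. Second, this list can be sampled directly. By Poisson thinning, each point of $X$ occurs independently with probability $1-e^{-\lambda/n}$. By symmetry of the uniform measure, the order of first occurrences is a uniformly random ordering of the occurring points. This gives at most $n$ pivots and the same law for the output tree, so the expected-error bound transfers. Your remark about the $\binom n2+1$ distinct values of $s$ correctly handles the other blow-up, $L=\lfloor M/h\rfloor$.

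Finally, turning the existence proof behind Theorem~\ref{thm:main} into an algorithm needs three more steps:
\begin{enumerate}
\item Compute $\Hyp(s)$ exactly in $O(n^3)$ time to set $h$ and $\lambda$.
\item Draw $\theta$ uniformly from $[0,h)$. This suffices because the bound of Theorem~\ref{thm:treerealization} is affine in $\sum_j\beta_j$, whose $\theta$-average is at most $\Hyp(s)/h$.
\item Treat $\operatorname{AvgUM}(d)=0$ and $\Hyp(s)>eM/63$ separately. In the first case $d$ is itself ultrametric, so output it; in the second, output the constant ultrametric $M$. In these cases the theorem's proof uses a limit or the trivial bound, not one run of the procedure.
\end{enumerate}
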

\section{The Pivot Construction}
In this section, we present our randomized tree construction, detailed in Procedure~\ref{alg:shared-pivot-tree}. The choice of parameters and the error analysis are deferred to Section~4.

Intuitively, at each threshold $t_j$, we consider the binary relation
\[ A_j(x,y):=\mathbf{1}\{s(x,y)\geq t_j\}. \]
A correlation clustering for this relation seeks to place pairs with $A_j(x,y)=1$ in the same cluster and pairs with $A_j(x,y)=0$ in different clusters. To obtain a tree representation, those clusterings must also be consistent across thresholds: the clustering at a higher threshold must refine those at lower thresholds. This is the viewpoint of hierarchical correlation clustering and its connection with ultrametric fitting~\cite{an2025handling, cohen2024fitting, yim2024fitting}. We adopt this viewpoint on a probability space, measuring disagreement by the probability that a randomly sampled pair is classified incorrectly.

Inspired by \textsc{KwikCluster}~\cite{ailon2008aggregating}, our construction forms clusters using sampled pivots and their neighborhoods. Conditional on the sample size $N$, we draw $Z_1, \cdots, Z_N$ independently from the underlying probability measure. Each point $x$ is assigned to its first neighboring pivot, whose index denoted by $\ell_j(x)$. The same ordered sample is used at every threshold. Note that every sampled pivot remains available for assignment, even if that pivot has itself been assigned to an earlier one. Sharing the sample allows us to track assignments across levels.

\begin{algorithm}[t]
    \caption{Tree construction using shared-pivot}
    \label{alg:shared-pivot-tree}
    \begin{algorithmic}[1]
    \Require $(S,\mathcal F,\PP)$ and $s:S\times S\to[0,b]$.
    \Require $0<h\leq b$, $0 \leq \theta<h$, $\lambda>0$, and $\varepsilon>0$.
    \Ensure A compatible rooted weighted tree $T$ with leaf set $S$.
    
    \State $L\gets\lfloor b/h\rfloor$.
    \State Sample $N\sim\operatorname{Poisson}(\lambda)$.
    \State Conditional on $N$, sample $Z_1,\ldots,Z_N$ independently from $\PP$.
    \State Initialize $T$ with root $r$, and set
           $\mathcal P_0\gets\{S\}$, $v_{0,S}\gets r$.
    
    \For{$j=1,\ldots,L$}
        \State $t_j\gets\theta+(j-1)h$.
        \State For each $x\in S$, set
        \Statex \hspace{\algorithmicindent}$\displaystyle
            \ell_j(x)\gets
            \min\{i\in\{1,\ldots,N\}:s(x,Z_i)\geq t_j\},
            \qquad \min\varnothing:=\infty.$
        \State $\displaystyle
            \mathcal P_j\gets
            \{C\cap\ell_j^{-1}(\{i\}):
              C\in\mathcal P_{j-1},\ i\in\{1,\ldots,N\}\}
            \setminus\{\varnothing\}$.
        \ForAll{$C\in\mathcal P_j$}
            \State Let $D\in\mathcal P_{j-1}$ be the unique class containing $C$.
            \State Add a new vertex $v_{j,C}$ to $T$.
            \State Join $v_{j,C}$ to $v_{j-1,D}$ by an edge of length $h$.
        \EndFor
    \EndFor
    
    \ForAll{$x\in S$}
        \State $\displaystyle
            j(x)\gets
            \max\{j\in\{0,\ldots,L\}:x\in\bigcup\mathcal P_j\}$.
        \State Let $C(x)\in\mathcal P_{j(x)}$ be the unique class containing $x$.
        \State Attach $x$ as a leaf to $v_{j(x),C(x)}$
               by an edge of length $\varepsilon$.
    \EndFor
    
    \State \Return $T$.
    \end{algorithmic}
\end{algorithm}

Let $H_j$ denote the partial equivalence relation that groups points with the same finite value of $\ell_j$. Although the threshold relations $A_j$ are nested, the relations $H_j$ need not be. Two points assigned to different pivots at one level may both move to the same later pivot when the threshold increases. We therefore refine the preceding hierarchy by setting
\[
    K_j(x,y):=\prod_{i=1}^{j}H_i(x,y).
\]
It is easy to verify that each class of $K_{j+1}$ is contained in a class of $K_j$.

The resulting nested classes determine the tree. We represent each nonempty class at each level by a distinct internal vertex and connect it to the vertex representing its containing class at the preceding level. We represent $S$ by the root. We assign length $h$ to edges between successive levels and attach each point by a leaf edge of length $\varepsilon$ to the vertex representing its deepest active class, or to the root if it was never active. Since there are only finitely many levels and sampled pivots, the tree has finitely many internal vertices. The classes are measurable by construction. Therefore, the resulting tree is compatible with the framework of Section~2.

The error analysis balances the mesh size against the expected number of sampled pivots. A finer mesh reduces the discretization error but increases the number of levels over which disagreements may accumulate: a pair separated at one level remains separated at subsequent levels of the hierarchy. Increasing the expected number of pivots reduces the contribution of uncovered pairs, while the bounds on pivot-induced disagreements grow with this parameter. We control these disagreements through bad triangles in the threshold relations, whose density integrated over the thresholds equals average hyperbolicity. Section~4 combines the discretization, disagreement, and sampling estimates and chooses the parameters to obtain the cube-root bound.

\begin{figure}
    \centering
    \begin{tikzpicture}[
        scale=0.65,
        x=0.85cm,y=0.85cm,
        font=\scriptsize,
        region/.style={fill=black!8},
        exposed/.style={draw=black,line width=0.75pt},
        covered/.style={draw=black!65,line width=0.65pt,
                        dash pattern=on 2pt off 2.8pt},
        pivot/.style={circle,draw=black,fill=white,line width=0.75pt,
                      inner sep=0pt,minimum size=3.4pt},
        every label/.style={inner sep=2pt}
    ]
    
    \def\SetLevel#1{%
        \ifcase#1\relax
        \or 
            \def\first{(0,0) ellipse[x radius=3.00,y radius=2.60]}
            \def\second{(1.30,0.55) ellipse[x radius=2.65,y radius=1.75]}
            \def\third{(5.30,-0.30) ellipse[x radius=1.10,y radius=1.03]}
            \def\fourth{(4.45,-1.55) ellipse[x radius=0.75,y radius=0.68]}
            \def\fourthlabel{(5.13,-2.20)}
            \def\fifth{(2.65,-0.80) ellipse[x radius=2.00,y radius=1.35]}
        \or 
            \def\first{(0,0) ellipse[x radius=2.20,y radius=1.90]}
            \def\second{(2.65,0.90) ellipse[x radius=0.98,y radius=0.95]}
            \def\third{(5.30,-0.30) ellipse[x radius=0.90,y radius=0.85]}
            \def\fourth{(4.45,-1.55) ellipse[x radius=0.50,y radius=0.46]}
            \def\fourthlabel{(5.00,-2.05)}
            \def\fifth{(2.60,-0.70) ellipse[x radius=1.80,y radius=1.15]}
        \or 
            \def\first{(0,0) ellipse[x radius=1.45,y radius=1.30]}
            \def\second{(2.65,0.90) ellipse[x radius=0.80,y radius=0.75]}
            \def\third{(5.30,-0.30) ellipse[x radius=0.60,y radius=0.57]}
            \def\fourth{(4.45,-1.55) ellipse[x radius=0.32,y radius=0.32]}
            \def\fourthlabel{(4.87,-1.90)}
            \def\fifth{(2.50,-0.60) ellipse[x radius=1.35,y radius=0.90]}
        \fi
    }
    
    \def\FillActiveRegion{%
        \path[region] \first;
        \path[region] \second;
        \path[region] \third;
        \path[region] \fourth;
        \path[region] \fifth;
    }
    
    \def\DrawClassBoundaries{%
        \draw[exposed] \first;
        \begin{scope}[even odd rule]
            \clip (-10,-7) rectangle (10,7) \first;
            \draw[exposed] \second;
        \end{scope}
        \draw[exposed] \third;
        \begin{scope}[even odd rule]
            \clip (-10,-7) rectangle (10,7) \third;
            \draw[exposed] \fourth;
        \end{scope}
        \begin{scope}[even odd rule]
            \clip (-10,-7) rectangle (10,7) \first;
            \clip (-10,-7) rectangle (10,7) \second;
            \clip (-10,-7) rectangle (10,7) \third;
            \clip (-10,-7) rectangle (10,7) \fourth;
            \draw[exposed] \fifth;
        \end{scope}
    }
    
    \def\DrawPivots#1{%
        \node[pivot,label=above left:{$Z_1$}] at (0,0) {};
        \node[pivot,label=above right:{$Z_2$}] at (2.65,0.90) {};
        \node[pivot] at (5.30,-0.30) {};
        \node[inner sep=0pt] at (5.30,0.06) {$Z_3$};
        \node[pivot] at (4.45,-1.55) {};
        \node[anchor=west,inner sep=0pt] at \fourthlabel {$Z_4$};
        \node[pivot,label=below:{$Z_5$}] at (3.45,-0.80) {};
        \node[anchor=east] at (-3.75,0) {$t_{#1}$};
    }
    
    \def\DrawH#1{%
        \SetLevel{#1}
        \FillActiveRegion
        \draw[covered] \second;
        \draw[covered] \fourth;
        \draw[covered] \fifth;
        \DrawClassBoundaries
        \DrawPivots{#1}
    }
    
    \def\DrawK#1{%
        \SetLevel{#1}
        \FillActiveRegion
        \ifnum#1>1\relax
            \begin{scope}
                \clip \first \second \third \fourth \fifth;
                \pgfmathtruncatemacro{\lastlevel}{#1-1}
                \foreach \previouslevel in {1,...,\lastlevel}{%
                    \SetLevel{\previouslevel}
                    \DrawClassBoundaries
                }
            \end{scope}
        \fi
        \SetLevel{#1}
        \DrawClassBoundaries
        \DrawPivots{#1}
    }
    
    \node[font=\large] at (1.20,12.55) {$H_j$};
    \begin{scope}[yshift=8.585cm] \DrawH{3} \end{scope}
    \begin{scope}[yshift=4.505cm] \DrawH{2} \end{scope}
    \begin{scope}                \DrawH{1} \end{scope}
    
    \begin{scope}[xshift=10.88cm]
        \node[font=\large] at (1.20,12.55) {$K_j$};
        \begin{scope}[yshift=8.585cm] \DrawK{3} \end{scope}
        \begin{scope}[yshift=4.505cm] \DrawK{2} \end{scope}
        \begin{scope}                \DrawK{1} \end{scope}
    \end{scope}
    
    \pgfresetboundingbox
    \path[use as bounding box] (-4.20,-2.70) rectangle (19.30,13.00);
    \end{tikzpicture}
    \caption{Pivoting assignments $H_j$ (left) and their hierarchical refinement $K_j$ (right), using the same ordered pivots $Z_i$ at thresholds $t_j$. Points are assigned to their first neighboring pivot, if any.}
    \label{fig:pivoting}
\end{figure}

\section{Quantitative Bounds}

We use the notation of Procedure~\ref{alg:shared-pivot-tree}. Let $X,Y,Z$ be independent random points with law $\PP$, independent of the pivot sample. We write $\EE_{\mathrm{piv}}$ for expectation over both $N\sim\operatorname{Poisson}(\lambda)$ and the sampled pivots, and $|\cdot|_1$ for the $L^1(\PP\otimes\PP)$ norm. For $t\in [0,b]$, define
\[ A_t(x,y):=\mathbf{1}\{s(x,y)\geq t\}, \qquad A_j:=A_{t_j}.\]
The clustering at level $j$ and its hierarchical refinement are represented by the indicator functions
\begin{align*}
H_j(x,y)&:=\mathbf{1}\{\ell_j(x)=\ell_j(y)<\infty\},\\
K_j(x,y)&:=\prod_{i=1}^{j}H_i(x,y).
\end{align*}
Thus $K_j(x,y)=1$ precisely when $x$ and $y$ belong to the same class of $\mathcal{P}_1, \cdots, \mathcal{P}_j$, and $K_{j+1}\leq K_j$. We also define the discretized input similarity and the hierarchy similarity by
\[ q(x,y):=h\sum_{j=1}^{L}A_j(x,y), \qquad u(x,y):=h\sum_{j=1}^{L}K_j(x,y). \]
The shifted grid gives $|s(x,y)-q(x,y)|\leq h$ for all $x,y$. For the tree $T$ constructed with leaf-edge length $\varepsilon$, we have
\[ \langle x,y\rangle_r =u(x,y)+\varepsilon\mathbf{1}\{x=y\}. \]
Since $\varepsilon >0$ can be arbitrary, we will take $\varepsilon \searrow 0$ and take the infimum later.

We also define the density of \emph{ordered bad triangles} in $A_t$ by
\[ \beta(t):=\EE\left[ (1-A_t(X,Y))A_t(X,Z)A_t(Y,Z) \right],
\qquad \beta_j:=\beta(t_j). \]
We begin with a basic observation relating the density of bad triangles to average hyperbolicity; cf.~\cite[Proposition~3.2]{yim2024fitting}.
\begin{lemm}\label{lemma:beta-bad-triangle}
    We have \[ \int_0^b \beta(t)dt = \Hyp(s).\]
\end{lemm}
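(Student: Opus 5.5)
The plan is to use the layer-cake representation $(w)_+=\int_0^\infty \mathbf{1}\{w\geq t\}\,dt$ in reverse, i.e.\ to write the integrand of $\Hyp(s)$ as a $dt$-integral and then swap integral and expectation by Tonelli. Throughout, $X,Y,Z$ are i.i.d.\ with law $\PP$ and $s$ is measurable and nonnegative, so Fubini--Tonelli applies with no integrability concerns.

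The key pointwise identity to check is, for fixed $x,y,z\in S$,
\[
\int_0^b (1-A_t(x,y))\,A_t(x,z)\,A_t(y,z)\,dt=\bigl(\min\{s(x,z),s(y,z)\}-s(x,y)\bigr)_+ .
\]
For $t\in[0,b]$, the integrand equals $1$ exactly when $s(x,y)<t$ and $t\leq\min\{s(x,z),s(y,z)\}$. The set of such $t$ is the interval $(s(x,y),\,m]$ with $m:=\min\{s(x,z),s(y,z)\}$. Since all values of $s$ lie in $[0,b]$, this interval lies inside $[0,b]$ whenever it is nonempty, so truncating the integral at $b$ loses nothing. The interval has Lebesgue measure $m-s(x,y)$ if this is positive and $0$ otherwise, which is the claimed $(m-s(x,y))_+$.

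Next, substitute $(x,y,z)=(X,Y,Z)$ and take expectations. The map $(t,x,y,z)\mapsto(1-A_t(x,y))A_t(x,z)A_t(y,z)$ is jointly measurable, because sets such as $\{(t,x,y): s(x,y)\geq t\}$ are measurable in the product $\sigma$-algebra. It is also nonnegative, so Tonelli gives
\[
\int_0^b\beta(t)\,dt=\EE\int_0^b (1-A_t(X,Y))A_t(X,Z)A_t(Y,Z)\,dt=\Hyp(s).
\]

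The only step requiring care is the joint measurability in $t$. It reduces to the measurability of $s$ together with the measurability of $\{(t,v): v\geq t\}\subset\mathbb{R}^2$. Apart from that, the interval computation is elementary.
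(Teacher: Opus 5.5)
Your proof is correct and follows the paper's argument: the integrand is the indicator of $s(x,y)<t\leq\min\{s(x,z),s(y,z)\}$, and integrating it in $t$ and exchanging with the expectation gives $\Hyp(s)$. Your explicit Tonelli and joint-measurability justification is a detail the paper leaves implicit.
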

\begin{proof}
    For every $x,y,z \in S$,
    \[ (1-A_t(x,y))A_t(x,z)A_t(y,z) =\mathbf{1}\{s(x,y)<t\leq\min\{s(x,z),s(y,z)\}\}. \]
    Consequently,
    \begin{align*}
        \int_0^b \beta(t)dt
        &= \EE\left[
            \int_0^b
            \mathbf{1}\{s(X,Y)<t\leq\min\{s(X,Z),s(Y,Z)\}\}
            dt
        \right]\\
        &= \EE\left[
            \bigl(\min\{s(X,Z),s(Y,Z)\}-s(X,Y)\bigr)_+
        \right]\\
        &= \Hyp(s),
    \end{align*}
    as desired.
\end{proof}
For the error analysis, we bound the tree approximation error given a fixed realization of the pivots as follows:
\begin{align}\label{eq:s-u}
    \EE_{X,Y} |s(X,Y) - \langle X,Y \rangle_r | & \leq \EE_{X,Y} |s(X,Y) - u(X,Y)| + \varepsilon \nonumber \\
    & \leq \EE_{X,Y} |s(X,Y) - q(X,Y)| + \EE_{X,Y} |q(X,Y) - u(X,Y)| + \varepsilon \nonumber \\
    &\leq h+h\sum_{j=1}^{L}
    \PP\bigl(A_j(X,Y)\neq K_j(X,Y)\bigr)+\varepsilon \tag{*}.
\end{align}
We therefore focus on bounding $\PP\bigl(A_j(X,Y)\neq K_j(X,Y)\bigr)$. Fix $0<h\leq b$, $\theta\in [0,h)$, and $\lambda>0$.
\begin{prop}\label{prop:disagreement}
For the shared-pivot construction with
$N\sim\operatorname{Poisson}(\lambda)$, we have
\[
    \EE_{\mathrm{piv}}\left[
        \PP\bigl(A_j(X,Y)\neq K_j(X,Y)\bigr)
    \right]
    \leq \lambda\beta_j
    +2\lambda\sum_{i=1}^{j}\beta_i
    +\frac{1}{e\lambda}.
\]
for every $j = 1, \cdots, L$. Here the inner probability is taken over $X,Y$ with the pivots fixed.
\end{prop}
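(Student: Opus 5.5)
The plan is to split the disagreement event $\{A_j(X,Y)\neq K_j(X,Y)\}$ into false positives $\{A_j=0,\ K_j=1\}$ and false negatives $\{A_j=1,\ K_j=0\}$. Each false positive will be charged to a bad triangle at level $j$. Each false negative will be charged either to a bad triangle at some level $i\leq j$, or to the event that no pivot covers $X$ or $Y$ at level $j$. Throughout we use that, conditionally on $N$, the pivots are i.i.d.\ with law $\PP$ and independent of $X,Y$. Hence for any event $E(x,y,z)$ the expected number of indices $i\leq N$ with $E(X,Y,Z_i)$ equals $\lambda\,\PP(E(X,Y,Z))$. This is a union bound in expectation.

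\emph{False positives.} Suppose $K_j(X,Y)=1$. Then $H_j(X,Y)=1$, so $X$ and $Y$ are assigned at level $j$ to a common pivot $Z_i$. This gives $A_j(X,Z_i)=A_j(Y,Z_i)=1$. If moreover $A_j(X,Y)=0$, then $(X,Y,Z_i)$ is an ordered bad triangle in $A_j$. Bounding the indicator by the number of such pivots and taking $\EE_{\mathrm{piv}}$, the false-positive contribution is at most $\lambda\beta_j$.

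\emph{False negatives.} Suppose $A_j(X,Y)=1$ and $K_j(X,Y)=0$. Pick $i\leq j$ with $H_i(X,Y)=0$. Since $t_i\leq t_j$, we have $A_i\geq A_j$, so $A_i(X,Y)=1$ as well. There are two cases.
\begin{enumerate}[label=(\alph*)]
\item Some pivot is $A_i$-adjacent to $X$ or $Y$. Let $Z_m$ be the first such pivot. Because $H_i(X,Y)=0$, the pivot $Z_m$ cannot be adjacent to both. So either $A_i(X,Z_m)=1,\ A_i(Y,Z_m)=0$, or the symmetric configuration holds. Together with $A_i(X,Y)=1$, each configuration is a bad triangle with the missing edge at the pivot. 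After relabeling the i.i.d.\ variables $X,Y,Z$, its density equals $\beta_i$. Summing over pivots, over the two configurations, and over $i\leq j$ gives at most $2\lambda\sum_{i\leq j}\beta_i$.
\item No pivot is $A_i$-adjacent to $X$ or $Y$. Coverage is monotone decreasing in the threshold, so then no pivot is $A_j$-adjacent to $X$ either.
\end{enumerate}
Case (b) is bounded once, at level $j$, by $\EE\bigl[A_j(X,Y)\,\mathbf 1\{\text{no pivot adjacent to }X\text{ at }t_j\}\bigr]$. Set $p(x):=\PP_Z(A_j(x,Z)=1)$. By the Poisson void probability, the chance that no pivot lies in this set is $e^{-\lambda p(x)}$. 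Averaging over $Y$ with $X$ fixed gives $\EE_Y A_j(X,Y)=p(X)$. So the term equals $\EE_X\bigl[p(X)e^{-\lambda p(X)}\bigr]\leq\sup_{p\geq0}pe^{-\lambda p}=1/(e\lambda)$.

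Adding the three contributions yields the proposition. The main point needing care is the case analysis for false negatives. One must check that every such pair falls either into a first-pivot mismatch at some level $i$, or into non-coverage of $X$, which by monotonicity can be charged to level $j$ alone. That is what keeps the $1/(e\lambda)$ term from being multiplied by $j$. Using Poisson sampling makes both the union-bound expectation and the void probability exact, which is why it is used instead of a fixed $N$.
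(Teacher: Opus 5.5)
Your proposal is correct and follows essentially the same argument as the paper. False positives are charged to a common pivot at level $j$. False negatives are charged either to the first, separating pivot at some level $i\le j$, or to non-coverage of $X$ at level $j$ with the condition on $Y$ dropped, which gives $\EE_X[p(X)e^{-\lambda p(X)}]\le 1/(e\lambda)$. The differences are only cosmetic: you use the Poisson identities (expected count $\lambda\PP(E)$, void probability $e^{-\lambda p}$) directly rather than conditioning on $N=k$ and summing the Poisson series, and you split false negatives by coverage at the separating level $i$ rather than at level $j$, which monotonicity makes equivalent.
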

\begin{proof}
    We first condition on $N = k$ and bound $\EE_{\mathrm{piv}}\left[\PP\bigl(A_j(X,Y)\neq K_j(X,Y)\bigr) \,\middle|\,N=k \, \right]$. We will check the three cases when $A_j$ and $K_j$ differs.

    \begin{figure}
    \centering
    \begin{tikzpicture}[
        x=1cm, y=1cm,
        font=\normalsize,
        cluster/.style={draw=black, fill=black!5, line width=0.7pt},
        positive/.style={draw=black, line width=0.8pt},
        negative/.style={draw=black, line width=0.8pt,
                         dash pattern=on 2pt off 2.6pt},
        vertex/.style={circle, draw=black, fill=white,
                       line width=0.8pt, inner sep=0pt, minimum size=4.6pt},
        every label/.style={inner sep=2pt}
    ]
    
    \draw[cluster] (2.70,2.90) ellipse[x radius=2.55,y radius=2.20];
    \draw[cluster] (9.05,3.35) ellipse[x radius=2.30,y radius=1.55];
    
    \coordinate (z1) at (2.50,3.15);
    \coordinate (x1) at (2.50,1.55);
    \coordinate (y1) at (4.10,3.00);
    \draw[positive] (x1) -- (z1) -- (y1);
    \draw[negative] (x1) -- (y1);
    \node[vertex,label=above left:{$Z_1$}] at (z1) {};
    \node[vertex,label=below left:{$x_1$}] at (x1) {};
    \node[vertex,label=right:{$y_1$}] at (y1) {};
    
    \coordinate (z2) at (9.00,3.30);
    \coordinate (x2) at (7.85,2.60);
    \coordinate (y2) at (9.20,1.15);
    \draw[positive] (z2) -- (x2) -- (y2);
    \draw[negative] (z2) -- (y2);
    \node[vertex,label=above right:{$Z_2$}] at (z2) {};
    \node[vertex,label=left:{$x_2$}] at (x2) {};
    \node[vertex,label=below right:{$y_2$}] at (y2) {};
    
    \coordinate (x3) at (5.15,0.30);
    \coordinate (y3) at (6.65,1.05);
    \draw[positive] (x3) -- (y3);
    \node[vertex,label=above left:{$x_3$}] at (x3) {};
    \node[vertex,label=above right:{$y_3$}] at (y3) {};
    \node at (5.90,-0.28) {$\ell_j(x_3)=\ell_j(y_3)=\infty$};
    
    \end{tikzpicture}

    \caption{Three cases of disagreement: the pairs $(x_1,y_1)$, $(x_2,y_2)$, and $(x_3,y_3)$ illustrate a false positive, a false negative with a separating pivot, and an unassigned false negative, respectively.}
    \label{fig:disagreement}
\end{figure}
    
    \begin{description}
        \item[Case 1] $A_j(X,Y) = 0$ with $K_j(X,Y) = 1$: $H_j(X,Y) = 1$ as well. Therefore, there exists $Z_i \,(i=1,\cdots,k)$ such that $A_j(X,Z_i) =  A_j(Y,Z_i) = 1$. Therefore, a union bound gives
        \begin{align*}
            &\EE_{\mathrm{piv}}\left[
                \PP\bigl(A_j(X,Y)=0,  K_j(X,Y)=1\bigr)
                \,\middle|\,N=k
            \right]
            \\\leq&
            \sum_{i=1}^{k}\EE_{X,Y,Z_i}\left[
                (1-A_j(X,Y))A_j(X,Z_i)A_j(Y,Z_i)
                \,\middle|\,N=k
            \right]=k\beta_j.
        \end{align*}
        Intuitively, the probability of a false positive can be bounded in terms of the density of bad triangles. Especially, such a witness consists of the two points and their common pivot, which is one of the $k$ sampled pivots.
        \item[Case 2] $A_j(X,Y) = 1$ with $K_j(X,Y) = 0$ and $\min(\ell_j(X), \ell_j(Y)) < \infty$: By definition, $\ell_m \leq\ell_{m+1}$ so that $\min\{\ell_{j'}(X),\ell_{j'}(Y)\}<\infty$ for every $j'\leq j$. Since $K_j(X,Y)=0$, there exists $j'\leq j$ such that $H_{j'}(X,Y)=0$. WLOG assume $\ell_{j'}(X) < \ell_{j'}(Y)$. Then the pivot $Z_i$, where $i=\ell_{j'}(X)$, separates $X$ and $Y$. Since $A_j\leq A_{j'}$, we have $A_{j'}(X,Y) = A_{j'}(X,Z_i) = 1, \, A_{j'}(Y,Z_i) = 0$.
        Again, a union bound gives
        \begin{align*}
            &\EE_{\mathrm{piv}}\left[
                \PP\bigl(
                    A_j(X,Y)=1,\ K_j(X,Y)=0,\
                    \min\{\ell_j(X),\ell_j(Y)\}<\infty
                \bigr)
                \,\middle|\,N=k
            \right]\\
            \leq&
            2\cdot \sum_{j'=1}^{j}\sum_{i=1}^{k}
            \EE_{X,Y,Z_i}\left[
                A_{j'}(X,Y)A_{j'}(X,Z_i)(1-A_{j'}(Y,Z_i))
                \,\middle|\,N=k
            \right]\\
            =&2k \cdot \sum_{j'=1}^{j}\beta_{j'}.
        \end{align*}
        Intuitively, the probability of a false negative caused by a \emph{separating pivot} can also be bounded in terms of the density of bad triangles containing sampled pivots. Since we limit the number of sampled points, both points may remain unassigned, giving rise to another type of false negative. We will treat this in the next case.
        \item[Case 3] $A_j(X,Y) = 1$ with $\ell_j(X) = \ell_j(Y) = \infty$: Since $\ell_j(X) = \infty$, $A_j(X, Z_i) = 0$ for all $i = 1,\cdots,k$. Define the neighborhood mass
        \[ q_j(x) := \int_S A_j(x,z) d \PP(z).\]
        Then conditional on $X$ and $N=k$, the events $A_j(X,Y) = 1$ and $A_j(X, Z_i) = 0$ for all $i=1,\cdots,k$ are independent, with probabilities $q_j(X)$ and $(1 - q_j(X))^k$. respectively. Dropping the condition $\ell_j(Y) = \infty$ and averaging over $X$, we obtain
        \[\EE_{\mathrm{piv}}\left[ \PP\bigl( A_j(X,Y)=1,\ \ell_j(X)=\ell_j(Y)=\infty \bigr) \,\middle|\,N=k \right] \leq \EE_X\left[q_j(X)(1-q_j(X))^k\right].\]
    \end{description}
    Finally, we take the expectation over $N$. It is clear to see that the probability of Case 1 and Case 2 being bounded by $\lambda \beta_j$ and $2 \lambda \sum_{i=1}^{j} \beta_i$, respectively. To verify Case 3, we see that
    \begin{align*}
        & \EE_{\mathrm{piv}}\left[ \PP\bigl( A_j(X,Y)=1,\ \ell_j(X)=\ell_j(Y)=\infty \bigr) \right] \\
        = & \sum_{k=0}^{\infty} \PP(N=k) \cdot \EE_{\mathrm{piv}}\left[ \PP\bigl( A_j(X,Y)=1,\ \ell_j(X)=\ell_j(Y)=\infty \bigr) \,\middle|\,N=k \right] \\
        \leq & \sum_{k=0}^{\infty} \frac{\lambda^k e^{-\lambda}}{k!} \cdot \EE_X \left[ q_j(X)(1-q_j(X))^k \right] \\
        = & \EE_X \left[ \sum_{k=0}^{\infty} \frac{\lambda^k e^{-\lambda}}{k!} q_j(X)(1-q_j(X))^k \right] = \EE_X q_j(X) e^{-\lambda q_j(X)} \leq \sup_{q \geq 0} qe^{-\lambda q} = \frac{1}{e \lambda}.
    \end{align*}
    Combining the three cases yields the desired bound on $\EE_{\mathrm{piv}}\left[ \PP\bigl(A_j(X,Y)\neq K_j(X,Y)\bigr) \right]$, completing the proof.
\end{proof}
\begin{thm}\label{thm:treerealization}
    Fix $0<h \leq b$, $\lambda, \varepsilon >0$ and $\theta \in [0,h)$, and set $L := \lfloor b/h \rfloor$. Procedure~\ref{alg:shared-pivot-tree} outputs a random compatible rooted tree $T$ satisfying
    \[ \EE_{\mathrm{piv}}\left[ \EE_{X,Y}|s(X,Y)-\langle X,Y\rangle_r|\right] \leq h + \lambda h(2L+1) \left(  \sum_{j=1}^{L} \beta_j \right) + \frac{hL}{e \lambda} + \varepsilon.\]
\end{thm}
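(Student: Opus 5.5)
The plan is to combine the pointwise decomposition \eqref{eq:s-u} with Proposition~\ref{prop:disagreement}, and then sum over the levels. First, we would confirm that Procedure~\ref{alg:shared-pivot-tree} really outputs a compatible tree for every realization of the pivots. This requires four checks.
\begin{enumerate}[label=(\roman*)]
    \item The internal vertices are finitely many: at most $1+\sum_{j\le L}|\mathcal P_j|\le 1+LN$, and $N<\infty$ almost surely.
    \item Each class is measurable, being of the form $\bigcap_{i\le j}\ell_i^{-1}(\{m_i\})$, where each $\ell_i$ is measurable since $\{\ell_i(x)\ge m\}=\bigcap_{i'<m}\{x:s(x,Z_{i'})<t_i\}$.
    \item The leaf-edge function $a\equiv\varepsilon$ is measurable.
    \item The Gromov product equals $u(x,y)+\varepsilon\mathbf 1\{x=y\}$, which is $\mathcal F\otimes\mathcal F$-measurable because each $K_j$ is and $\Delta_S\in\mathcal F\otimes\mathcal F$.
\end{enumerate}
The identity $\langle x,y\rangle_r=u+\varepsilon\mathbf 1\{x=y\}$ itself follows from the construction: the common path of $x$ and $y$ from $r$ runs through the vertices $v_{j,C}$ for exactly those $j$ with $K_j(x,y)=1$, each contributing $h$. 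For $x=y$, the leaf edge adds $\varepsilon$, and $u(x,x)=h\,j(x)$. This identity is what the paper already records, so we would simply cite it.

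Next, for a fixed pivot realization, \eqref{eq:s-u} gives
\[\EE_{X,Y}|s-\langle X,Y\rangle_r|\le h+h\sum_{j=1}^L\PP(A_j\ne K_j)+\varepsilon.\]
Two ingredients go into this. The bound $|s-q|\le h$ uses the shifted grid: $q(x,y)=h\cdot\#\{j\le L: t_j\le s(x,y)\}$, and since $t_j=\theta+(j-1)h$ with $\theta<h$ and $L=\lfloor b/h\rfloor$, this count lies within one of $s/h$. The bound $|q-u|\le h\sum_j|A_j-K_j|$ is immediate. We then take $\EE_{\mathrm{piv}}$, which is legitimate by Tonelli since everything is nonnegative and jointly measurable in the pivots and $(X,Y)$. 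Exchanging expectation and the finite sum, and applying Proposition~\ref{prop:disagreement} to each $j$, yields
\[h+h\sum_{j=1}^L\Big(\lambda\beta_j+2\lambda\sum_{i=1}^j\beta_i+\tfrac1{e\lambda}\Big)+\varepsilon.\]

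Finally, we bound the inner sums crudely: $\sum_{i=1}^j\beta_i\le\sum_{i=1}^L\beta_i$, since $\beta_i\ge0$. Then
\[\sum_{j=1}^L\Big(\beta_j+2\sum_{i\le j}\beta_i\Big)\le(1+2L)\sum_{i=1}^L\beta_i,\]
and the last term sums to $hL/(e\lambda)$. This gives exactly the stated bound.

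The main obstacle is not the algebra but the measurability bookkeeping needed to apply Tonelli jointly in the Poisson sample and $(X,Y)$, together with verifying the Gromov product identity. I would handle the first by conditioning on $N=k$, where the randomness is a product measure on $S^k$, and then summing the Poisson mixture.
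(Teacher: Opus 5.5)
Your proposal is correct and is essentially the paper's proof: insert Proposition~\ref{prop:disagreement} into the decomposition (*), sum over $j$, and use $\sum_{i\le j}\beta_i\le\sum_{i\le L}\beta_i$ to get the factor $2L+1$. One small slip in your compatibility check: $|\mathcal P_j|\le N$ is false in general, because classes of $\mathcal P_j$ are indexed by nondecreasing tuples $(\ell_1(x),\dots,\ell_j(x))$ and two distinct level-$1$ classes can move to a common later pivot without merging; the bound $|\mathcal P_j|\le N^j$ still gives finiteness, so the argument is unaffected.
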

\begin{proof}
    We combine the error decomposition \ref{eq:s-u} with Proposition~\ref{prop:disagreement}. We obtain
    \[
        \EE_{\mathrm{piv}}\left[ \EE_{X,Y}|s(X,Y)-\langle X,Y\rangle_r|\right] \leq
        h+\lambda h\sum_{j=1}^{L}
        \left(\beta_j+2\sum_{i=1}^{j}\beta_i\right)
        +\frac{hL}{e\lambda}+\varepsilon \leq
        h+\lambda h(2L+1)\left(\sum_{j=1}^{L}\beta_j \right)
        +\frac{hL}{e\lambda}+\varepsilon.
    \]
\end{proof}
We now express the approximation error in terms of average hyperbolicity and the parameters of our construction. The following bound makes explicit the roles of the mesh threshold $h$ and the expected number of pivots $\lambda$. A suitable choice of these two parameters yields our main theorem.
\begin{thm}\label{thm:treebound}
    Given $b, 0 < h \leq b$ and $\lambda > 0$, we have
    \[ \Tree(s) \leq h + \lambda \Hyp(s) \left(\frac{2b}{h} + 1\right) + \frac{b}{e\lambda}.\]
\end{thm}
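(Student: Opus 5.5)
We will derive the bound from Theorem~\ref{thm:treerealization} by averaging over the random grid offset $\theta$. The realized tree is a compatible tree for every choice of pivots, so $\Tree(s)$ is at most the expected error $\EE_{\mathrm{piv}}\EE_{X,Y}|s(X,Y)-\langle X,Y\rangle_r|$. Since $\Tree(s)$ does not depend on $\theta$, it is also at most the average of the bound in Theorem~\ref{thm:treerealization} over $\theta$ drawn uniformly from $[0,h)$.

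The key step is to average $\sum_{j=1}^L \beta_j=\sum_{j=1}^L\beta(\theta+(j-1)h)$ over $\theta\sim\mathrm{Unif}[0,h)$. The thresholds $\theta+(j-1)h$ for $j=1,\dots,L$ cover the interval $[0,Lh)\subseteq[0,b]$ exactly once as $\theta$ ranges over $[0,h)$. Hence
\[ \frac1h\int_0^h\sum_{j=1}^L\beta(\theta+(j-1)h)\,d\theta=\frac1h\int_0^{Lh}\beta(t)\,dt\leq\frac{\Hyp(s)}{h}. \]
This uses Lemma~\ref{lemma:beta-bad-triangle} and $\beta\geq0$. Measurability of $\beta$ in $t$ follows from Fubini, since the indicator $\mathbf 1\{s(x,y)<t\leq\min(s(x,z),s(y,z))\}$ is jointly measurable.

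Substituting this into Theorem~\ref{thm:treerealization} gives
\[ \Tree(s)\leq h+\lambda h(2L+1)\frac{\Hyp(s)}{h}+\frac{hL}{e\lambda}+\varepsilon = h+\lambda(2L+1)\Hyp(s)+\frac{hL}{e\lambda}+\varepsilon. \]
We then use $L=\lfloor b/h\rfloor\leq b/h$, which gives $2L+1\leq 2b/h+1$ and $hL\leq b$, and let $\varepsilon\searrow0$. This is exactly the claimed bound.

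The main obstacle is justifying the exchange of the expectation over $\theta$ with the infimum. We handle it by noting that for each fixed $\theta$ the infimum is at most that $\theta$'s expected error. Therefore $\Tree(s)$ is at most the integral over $\theta$ of these bounds, which needs only Tonelli for nonnegative integrands. Alternatively, we can pick a single $\theta$ at which $\sum_j\beta_j$ is at most its average, which exists because a function cannot exceed its mean everywhere. This avoids any measurability issue in $\theta$ beyond the integrability of $\beta$.
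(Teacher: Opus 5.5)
Your proposal is correct and follows the paper's proof. The paper integrates $\sum_{j}\beta(\theta+(j-1)h)$ over $\theta\in[0,h)$ to get $\int_0^{hL}\beta\le\Hyp(s)$, fixes a $\theta$ with $\sum_j\beta_j\le\Hyp(s)/h$ (exactly your alternative), applies Theorem~\ref{thm:treerealization}, uses $hL\le b$, and lets $\varepsilon\searrow0$; your averaging over $\theta$ is an equally valid way to state the same argument.
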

\begin{proof}
    Since $hL \leq b$, we have
    \[ \int_0^h \left( \sum_{j=1}^{L}\beta(\theta + (j-1)h) \right) d\theta = \int_0^{hL} \beta(t)dt \leq \int_0^b \beta(t)dt = \Hyp(s).\]
    Hence, there exists $\theta \in [0,h)$ so that the summation $\sum_{j=1}^{L}\beta_j$ is at most $\Hyp(s)/h$. Thus we may fix such $\theta$. By \ref{thm:treerealization}, some realization of the sampled pivots yields a tree whose approximation error satisfies the stated bound. Consequently,
    \[ \Tree(s) \leq h+\lambda h(2L+1)\left(\sum_{j=1}^{L}\beta_j \right)
        +\frac{hL}{e\lambda}+\varepsilon \leq h + \lambda \Hyp(s) \left(\frac{2b}{h}+1 \right) + \frac{b}{e \lambda} + \varepsilon . \]
    Letting $\varepsilon \searrow 0$ completes the proof.
\end{proof}
We derive the main theorem by optimizing the parameters.
\begin{proof}[Proof of Theorem~\ref{thm:main} from Theorem~\ref{thm:treebound}]
    We first assume $b = 1$ by rescaling $s$. We will further assume that $\Hyp(s) \leq e/63$, since otherwise the trivial bound $\Tree(s) \leq 1$ already shows the desired estimate.
    
    Suppose $\Hyp(s) > 0$. Set
    \( h := \left( \frac{7 \Hyp(s)}{3e} \right)^{1/3} \; \text{and} \; \lambda:= \frac{1}{eh}.\)
    Since $h \leq \frac{1}{3}$, we obtain
    \begin{align*}
    \Tree(s)
    &\leq h + \lambda \Hyp(s) \left(1 + \frac{2}{h} \right) + \frac{1}{e\lambda}\\
    &\leq h+\frac{7\lambda}{3h}\Hyp(s)+\frac{1}{e\lambda}\\
    &=h+e\lambda h^2+\frac{1}{e\lambda}\\
    &=3h
    =\left(\frac{63}{e}\right)^{1/3}\Hyp(s)^{1/3}.
    \end{align*}
    If $\Hyp(s) = 0$, then Theorem~\ref{thm:main} gives $\Tree(s) \leq h + \frac{1}{e\lambda}$. Letting $h \searrow 0$ with $\lambda \to \infty$ proves $\Tree(s) = 0$, as desired.
\end{proof}

\begin{proof}[Proof of Corollary~\ref{thm:ultrametric-fitting} from Theorem~\ref{thm:main}]
Equip $X$ with the uniform distribution and set $s:=M-d$. Then
    \begin{align*}
        n^3 \Hyp(s) & = \sum_{(x,y,z) \in X^3} ( \min(s(x,z) , s(y,z)) - s(x,y))_+ \\
        & = \sum_{(x,y,z) \in X^3} (d(x,y) - \max(d(x,z) , d(y,z)))_{+} = \binom{n}{3} \operatorname{AvgUM}(d).
    \end{align*}
By Theorem~\ref{thm:main}, the ultrametric defined by $d_U(x,y)=M-s_T(x,y)$ for $x\ne y$ and $d_U(x,x)=0$ satisfies
    \[
    \EE\!\left[\sum_{\{x,y\}\in\binom{X}{2}}
    |d(x,y)-d_U(x,y)|\right]
    \le \frac{n^2}{2}\left(\frac{63}{e}M^2\Hyp(s)\right)^{1/3}
    = n\left(\frac{63}{4e}M^2\binom{n}{3}
    \operatorname{AvgUM}(d)\right)^{1/3}.
    \]
Note that the $\varepsilon$ terms disappear since its perturbations affect only the diagonal and hence do not contribute to the fitting error. Repeated pivots do not change the construction, so only their first occurrences need to be retained. This ordered list can be sampled directly by including each vertex independently with probability $1-e^{-\lambda/n}$ and ordering the selected vertices uniformly at random. By Poisson thinning and symmetry, this gives the same distribution as the original procedure after removing repetitions. Using this single list across all thresholds, there are at most $n$ pivots and $\binom{n}{2}+1$ relevant similarity levels. Thus clustering and refinement take polynomial time, regardless of the size of $\lambda$.
\end{proof}

\section{Discussion}\label{sec:discussion}

Our upper bound raises the question of the optimal dependence of
$\Tree(s)$ on $\Hyp(s)$ and the range bound $b$. It is natural to ask what the asymptotically tight bound is. A simple family shows that the square-root scale in terms of $b$ and $\Hyp(s)$ is unavoidable.
\begin{prop}
    Let $S$ be an atomless probability space with a measurable partition $U_1, V_1, \cdots, U_m, V_m$, each having mass $1/(2m)$. Define $s_m$ by
    \[
    s_m(x,y)=
    \begin{cases}
    b, & \text{if } (x,y) \in U_i \times V_i \text{ or } (x,y) \in V_i \times U_i
    \text{ for some } i,\\
    b \cdot \mathbf{1}(x=y), & \text{otherwise}.
    \end{cases}
    \]
    Then \( \Tree(s_m) = \frac{b}{2m} = \sqrt{b \Hyp(s_m)}.\)
\end{prop}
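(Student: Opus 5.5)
We compute $\Hyp(s_m)$ directly, and then prove matching upper and lower bounds for $\Tree(s_m)$. The lower bound comes from the layer-cake (threshold) decomposition together with a correlation-clustering estimate for complete bipartite relations.

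\emph{Computing $\Hyp(s_m)$.} Since $S$ is atomless, $\PP\otimes\PP(\Delta_S)=0$, so almost surely $X,Y,Z$ are pairwise distinct. Off the diagonal, $s_m$ takes only the values $0$ and $b$. Hence $(\min\{s_m(X,Z),s_m(Y,Z)\}-s_m(X,Y))_+$ equals $b$ exactly when $s_m(X,Z)=s_m(Y,Z)=b$ and $s_m(X,Y)=0$, and equals $0$ otherwise. This event occurs precisely when, for some $i$, either $Z\in U_i$ and $X,Y\in V_i$, or $Z\in V_i$ and $X,Y\in U_i$. Its probability is $m\cdot 2\cdot(2m)^{-3}=1/(4m^2)$. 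Therefore $\Hyp(s_m)=b/(4m^2)$ and $\sqrt{b\Hyp(s_m)}=b/(2m)$.

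\emph{Upper bound.} Take the tree in which every leaf is attached directly to the root. Then $\langle x,y\rangle_r=0$ for $x\neq y$, and the diagonal is a null set. The mean error is therefore
\[
b\,\PP\bigl((X,Y)\in \textstyle\bigcup_i (U_i\times V_i\cup V_i\times U_i)\bigr)=b\cdot 2m(2m)^{-2}=\frac{b}{2m}.
\]

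\emph{Lower bound.} Fix any compatible weighted tree and write $g(x,y)=\langle x,y\rangle_r$. By the layer-cake formula,
\[
\EE|s_m-g|=\int_0^\infty \PP\bigl(\mathbf 1\{s_m\ge t\}\neq \mathbf 1\{g\ge t\}\bigr)\,dt\ \ge\ \int_0^b \PP\bigl(A_t(X,Y)\neq G_t(X,Y)\bigr)\,dt,
\]
where $G_t:=\mathbf 1\{g\ge t\}$. Gromov products in a rooted tree satisfy the ultrametric-type inequality $g(x,y)\ge\min\{g(x,z),g(y,z)\}$, so $G_t$ restricted to distinct points is a partial equivalence relation. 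For $t\in(0,b]$, its classes (up to null sets) are the leaf sets below the finitely many internal vertices at depth $\ge t$; these sets are measurable by compatibility, and the remaining points are singletons of mass zero. Meanwhile, off the diagonal $A_t$ is the disjoint union of the complete bipartite relations $U_i\times V_i\cup V_i\times U_i$.

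It then suffices to show that every such partition $\{C_k\}$ disagrees with $A_t$ on a set of ordered pairs of mass at least $m\cdot\frac{1}{2m^2}=\frac1{2m}$. Integrating this over $t\in(0,b]$ gives $b/(2m)$.

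The key step, and the one I expect to require the most care, is this per-block count. Fix $i$ and set $u_k=\PP(C_k\cap U_i)$ and $v_k=\PP(C_k\cap V_i)$, so that $\sum_k u_k=\sum_k v_k=\frac1{2m}$. Restricting attention to pairs inside $U_i\cup V_i$ only loses disagreements, and these restricted disagreements are disjoint across $i$. Within block $i$, the same-side pairs placed together contribute $\sum_k(u_k^2+v_k^2)$, and the cross pairs placed apart contribute $2\bigl(\frac1{4m^2}-\sum_k u_kv_k\bigr)$. The total is
\[
\frac{1}{2m^2}+\sum_k(u_k-v_k)^2\ \ge\ \frac1{2m^2}.
\]
Summing over the $m$ blocks gives the claim. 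Taking the infimum over trees yields $\Tree(s_m)\ge b/(2m)$, which combined with the upper bound proves $\Tree(s_m)=b/(2m)=\sqrt{b\Hyp(s_m)}$.
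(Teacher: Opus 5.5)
Your proof is correct and takes essentially the paper's route: a threshold (layer-cake) reduction to correlation clustering of the disjoint complete bipartite relations, closed by a sum-of-squares identity. Your per-block count $\frac{1}{2m^2}+\sum_k(u_k-v_k)^2$ is the paper's per-class computation $\PP(D)^2-4\sum_i a_ic_i\ge 0$ summed in the other order, with the cross-block term $2\sum_{i<j}(a_i+c_i)(a_j+c_j)$ dropped. One small slip: $\sum_k u_k=\frac{1}{2m}$ fails when the points outside every nontrivial class have positive total mass (e.g.\ for the star tree), but your identity never uses this normalization, so nothing breaks.
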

\begin{proof}
    It is easy to compute $\Hyp(s_m)$. Approximating $s_m = \varepsilon$ also yields $\Tree(s_m) \leq \frac{b}{2m} + \varepsilon$ for any $\varepsilon > 0$. To verify the lower bound $\Tree(s_m) \geq \frac{b}{2m}$, consider any class $D$
    of a threshold relation of a tree representation, and write
    $a_i=\PP(D\cap U_i)$ and $c_i=\PP(D\cap V_i)$. Relative to the zero
    relation, declaring all pairs in $D$ positive changes the
    disagreement probability by
    \[
        \PP(D)^2-4\sum_{i=1}^m a_i c_i
        =
        \sum_{i=1}^m(a_i-c_i)^2
        +2\sum_{i<j}(a_i+c_i)(a_j+c_j)
        \geq 0.
    \]
    Thus no threshold clustering improves the approximation error.
\end{proof}
In particular, the family shows that the assumption on $\sup s$ is essential and no universal estimate $\Tree(s)\leq Cb^{1-\alpha}\Hyp(s)^\alpha$ can hold with $\alpha>1/2$ as well. Together with our upper bound, this leaves a gap between the exponents $1/3$ and $1/2$. It is natural to ask whether the square-root estimate
\[ \Tree(s)\leq C\sqrt{b\,\Hyp(s)} \]
holds throughout the present framework.

One possible route to this estimate is to improve the passage from the relations $H_j$ to the hierarchy $K_j$. In our analysis, passing from $\{H_j\}$ to $\{K_j\}$ incurs a factor of order $L$ in the error bound. If this loss could be reduced to a constant factor, the procedure would yield $\Tree(s)=O(\sqrt{b\,\Hyp(s)})$. However, it turns out that the common refinement step can increase the total threshold disagreement by an arbitrarily large factor. The following Figure~\ref{fig:refinement-amplification} gives an amplification of $\varepsilon^{-1}-1$ with probability tending to one as $\varepsilon \to 0$. 

\begin{figure}[ht]
\centering
\begin{tikzpicture}[font=\small, line cap=round, line join=round,
  block/.style={circle,draw,fill=white,minimum size=6.5mm,inner sep=0pt},
  cluster/.style={draw,rounded corners=3pt,fill=black!3,line width=.55pt}]
  \node[font=\small\itshape] at (1.5,2.25) {Input similarities};
  \node[block] (U) at (0,1.05) {$U$};
  \node[block] (V) at (3,1.05) {$V$};
  \node[block,fill=black!12] (W) at (1.5,-.8) {$W$};
  \draw[line width=.9pt] (U) -- node[above=2pt] {$1$} (V);
  \draw (U) -- node[left=3pt] {$2 \varepsilon$} (W);
  \draw (V) -- node[right=3pt] {$\varepsilon$} (W);
  \node[above=2pt] at (U.north) {$2p(1-p)$};
  \node[above=2pt] at (V.north) {$p^2$};
  \node[below=2pt] at (W.south) {$(1-p)^2$};
  \node[font=\footnotesize] at (1.5,-2.1) {$s=1$ within each block};
  \node[font=\small\itshape] at (8.2,2.25) {Partitions on the event $E$};
  \node at (6.5,1.65) {$\varepsilon < t \leq 2 \varepsilon$};
  \node at (10,1.65) {$t > 2 \varepsilon$};
  \node at (4.55,.65) {$H_j$};
  \node at (4.55,-.75) {$K_j$};
  \draw[cluster] (5.35,.31) rectangle (6.8,.99);
  \draw[cluster] (7,.31) rectangle (7.7,.99);
  \node at (5.7,.65) {$U$};
  \node at (6.45,.65) {$W$};
  \node at (7.35,.65) {$V$};
  \draw[cluster] (8.85,.31) rectangle (10.3,.99);
  \draw[cluster] (10.5,.31) rectangle (11.2,.99);
  \node at (9.2,.65) {$U$};
  \node at (9.95,.65) {$V$};
  \node at (10.85,.65) {$W$};
  \draw[cluster] (5.35,-1.09) rectangle (6.8,-.41);
  \draw[cluster] (7,-1.09) rectangle (7.7,-.41);
  \node at (5.7,-.75) {$U$};
  \node at (6.45,-.75) {$W$};
  \node at (7.35,-.75) {$V$};
  \foreach \x/\lab in {9.2/U,10.025/V,10.85/W}{
    \draw[cluster] (\x-.35,-1.09) rectangle (\x+.35,-.41);
    \node at (\x,-.75) {$\lab$};
  }
  \node[font=\footnotesize] at (8.2,-2.1) {$K_j=\bigcap_{i\le j}H_i$};
\end{tikzpicture}
\caption{A temporary separation becomes permanent under common refinement. The labels outside the three input nodes indicate their masses. On $E$, the raw partitions reunite $U$ and $V$ at higher levels, while the common refinements keep them apart. Under our sampling scheme, the first pivot lies in $W$ and a later pivot lies in $U\cup V$ with probability tending to one as $p\searrow0$. Choosing $\varepsilon=cp^{3/2}$ with $c>0$ sufficiently large ensures that the prescribed mesh satisfies $h\le\varepsilon$. Hence, for every shift $\theta$, there is a threshold $\theta+ih\in(\varepsilon,2\varepsilon]$, at which the $UW$ pairs are positive and the $VW$ pairs are negative.}
\label{fig:refinement-amplification}
\end{figure}

Considering a finite input case, such as $|S| = n$ with the uniform probability measure, is also interesting. For example, methods beyond pivoting may be relevant. LP rounding for hierarchical correlation clustering provides a concrete direction to explore~\cite{an2025handling, cohen2024fitting}. Such an approach would still require a bound relating the fractional objective to average hyperbolicity; an approximation guarantee relative to the optimal fitting cost alone does not establish the desired structural estimate.

Our bound can also be compared with the guarantees of \textsc{HCCUltraFit}~\cite{yim2024fitting}, which controls the total fitting error by the sum of the ultrametric triangle violations over unordered triples. Writing $\|d-d_U\|_1:=\sum_{\{x,y\}\in\binom{X}{2}}|d(x,y)-d_U(x,y)|$ for the total distortion, selecting the better of the two outputs gives
\[
    \EE\|d-d_U\|_1=O\left(\min\left\{n^3\operatorname{AvgUM}(d),\,n^2M^{2/3}\operatorname{AvgUM}(d)^{1/3}\right\}\right),
\]
where $M=\operatorname{diam}(X,d)$. At fixed $n$ and $M$, the linear estimate is stronger for sufficiently small $\operatorname{AvgUM}(d)$, whereas our cube-root estimate becomes stronger as this quantity increases.

Finite inputs offer additional choices of threshold levels. In our construction, the choice of mesh size $h$ makes the number of layers $L$ depend on $\Hyp(s)$. For $|S|=n$, however, there are only $O(n^2)$ distinct threshold relations, determined by the distinct values of $s$. Likewise, repeated occurrences of a sampled pivot can be discarded, leaving at most $n$ distinct pivots, although replacing i.i.d.\ sampling by sampling without replacement would require modifying our analysis. These observations alone do not improve our estimate: the lengths of the threshold intervals must be retained as weights in the error analysis, and changing the sampling scheme requires new probability estimates. It remains to determine whether adapting the threshold levels and pivot sampling to finite inputs can improve the bounds.

An algorithmic implementation also raises the question of parameter selection. Under the same finite setup, $\Hyp(s)$ can be computed exactly by summing over ordered triples in $\Theta(n^3)$ time. For large inputs, this is costly so that sampling triples instead may be preferable. The related question is then to select $h$ and $\lambda$ reliably using an estimate $\widetilde{\Hyp}(s)$: especially when $\Hyp(s)$ is small. Finding a shift $\theta\in[0,h)$ that satisfies the required bound is also nontrivial in practice.

Finally, our result also relates to the spin-glass setting that motivated Chatterjee and Sloman~\cite{chatterjee2021average}. Their application assumes asymptotic ultrametricity of overlaps, or of a bounded transformation of them, and yields hierarchical approximations in mean. Related results include Panchenko's ultrametricity theorem~\cite{panchenko2013parisi} and Jagannath's approximate hierarchical decompositions~\cite{jagannath2017approximate}. The additional structure of these models may allow improved bounds in terms of average hyperbolicity.

\subsection*{Acknowledgements and AI Disclosure}
We thank Anna Gilbert, Hyung-Chan An and Changyeol Lee for their valuable comments and feedback. We also acknowledge the use of OpenAI's GPT-5.6 for assistance with language editing, figure generation, and exploratory discussion.

\bibliography{ref}
\bibliographystyle{plain}

\end{document}